\newtheorem{theorem}{Theorem}[section]
\newtheorem{lemma}[theorem]{Lemma}
\newtheorem{cor}[theorem]{Corollary}
\theoremstyle{definition}
\newtheorem{definition}[theorem]{Definition}
\theoremstyle{remark}
\numberwithin{equation}{section}
\begin{document}
	\title{Commuting Jordan derivations over a Ring with idempotents}
	\author{Sk Aziz}
	\address{Department of Mathematics, Indian Institute of Technology Patna, Patna-801106}
	\curraddr{}
	\email{E-mail: aziz$\_$2021ma22@iitp.ac.in}
	\thanks{}
	\author{Om Prakash$^{\star}$}
	\address{Department of Mathematics, Indian Institute of Technology Patna, Patna-801106}
	\curraddr{}
	\email{om@iitp.ac.in}
	\thanks{* Corresponding author}
	\author{Arindam Ghosh}
	\address{Department of Mathematics, Government Polytechnic Kishanganj, Kishanganj-855116}
	\curraddr{}
	\email{E-mail: arindam.rkmrc@gmail.com}
	\thanks{}

	\subjclass{16N60, 16W25, 17C27}
	
	\keywords{Prime ring; Derivation; Idempotent; Commutativity}
	
	\dedicatory{}
	
	\maketitle
	\begin{abstract}
		This paper explores the behavior of commuting Jordan derivations over prime rings with non-trivial idempotents and demonstrates that they become zero maps. Further, it establishes this result for commuting Jordan higher derivations over prime rings under specific conditions. In fact, it introduces commuting generalized Jordan derivations over rings and establishes that the zero map is the only commuting generalized Jordan derivation over prime rings under certain assumptions.
	\end{abstract}
	
	\section{Introduction}
	
	Throughout the paper, $P$ represents a prime ring with center $Z(P)$ unless otherwise stated. Also, $[p_1, p_2]$ represents the commutator of $p_1$ and $p_2$, defined as $p_1p_2 - p_2p_1$, for all $p_1, p_2 \in P$. Recall that a ring $P$ is said to be a prime ring if $p_1 P p_2 = 0$ implies that $p_1=0$ or $p_2=0$. Further, a ring $P$ is said to be $2$-torsion free when $2p = 0$ for some $p\in P$ implies $p = 0$. An additive mapping $h: P \rightarrow P$ is called a derivation if $h(p_1p_2) = h(p_1)p_2 + p_1h(p_2)$ for all $p_1, p_2 \in P$. On the other hand, a Jordan derivation $d: P \rightarrow P$ is an additive map satisfying $d(p^2) = d(p)p + pd(p)$ for all $p \in P$. An additive map $D$ is called a generalized Jordan derivation if $D(p^2) = D(p)p + pd(p)$ for all $p \in P$ and for some Jordan derivation $d$ on $P$. A left centralizer $f: P \rightarrow P$ is an additive map satisfying $f(p_1 p_2) = f(p_1) p_2 $ for all $p_1, p_2 \in P$. Similarly, we define the right centralizer. An additive map $g:P \rightarrow P$ is said to be a Jordan left centralizer if $g(p^2)=g(p)p$, for all $p\in P$. Similarly, a Jordan right centralizer could be defined. A map $F: P\rightarrow P$ is called a centralizing map if $[F(p), p] \in Z(P)$ for all $p \in P$. In particular, if $[F(p), p] = 0$ for all $p \in P$, then $F$ is called a commuting map on $P$.\\
	
	Herstein \cite{hers} initially established that over a $2$-torsion-free prime ring, every Jordan derivation becomes a derivation. Later, Cusack \cite{cu} extended this result to semiprime rings and provided an alternative proof as well. In $2008$, Vukman \cite{vuk1} discussed left Jordan derivations over semiprime rings. In his work, he demonstrated that if $P$ is a semiprime ring which is $2$-torsion free and $d$ is a left Jordan derivation over $P$, then $d$ becomes a derivation that maps $P$ into $Z(P)$. Recently, this investigation has gone on to the characterizations of Jordan derivations, specifically over triangular rings. Notably, a study of Fo{\v{s}}ner \cite{fos} established that every Jordan derivation over a triangular ring (may not contain unity) is indeed a derivation under certain conditions. For further exploration of the topic and additional insights into Jordan derivations,  we refer \cite{bre2,che,div,ghar,vuk2,zha, reh, kam}.\\
	In recent decades, the investigation of commuting maps has emerged as a highly active area of research within the study of mappings on rings. For detailed information on commuting maps, readers can see \cite{bre1}. The study of commuting and centralizing mappings dates back to 1955, marked by Divinsky's work \cite{div}. Divinsky demonstrated that a simple Artinian ring is commutative when a nontrivial automorphism satisfies some commuting properties. Later, Posner \cite{pos} made a significant contribution in $1957$ by establishing the initial result concerning commuting derivations. Posner's theorem asserts that if d is a commuting derivation acting on a prime ring $P$, then $P$ is a commutative ring, or the derivation $d$ is identically zero. In $2023$, Hosseini and Jing \cite{hos} proved that every commuting Jordan derivation over a triangular ring (may not contain unity) is identically zero map. \\
	
	In this paper, our primary objective is to investigate commuting Jordan derivations on a $2$-torsion free ring $P$. We present a distinct result from Posner's result with a completely different type of proof by demonstrating that any commuting Jordan derivation over a $ 2$-torsion-free prime ring $P$ must be identically zero. Moreover, we prove that commuting Jordan left centralizers over a $ 2$-torsion free prime ring under some specific conditions is basically a zero map. As a consequence, we prove that every commuting generalized Jordan derivation over the same ring is identically zero, where we define commuting generalized Jordan derivation in the following way:
	
	\begin{definition}
		Let $P$ be a ring. An additive map $D:P \rightarrow P$ is said to be a commuting generalized Jordan derivation if
		\begin{align*}
			& D(p^2)=D(p)p+pd(p)~\text{and}~D(p)p=pD(p),
		\end{align*}
		\text{for all}~$p\in P$ \text{and $d$ is a Jordan derivation}~ over $P$.	
	\end{definition}
	Commuting Jordan derivation and commuting Jordan left centralizer are defined in \cite{hos} as the follows:
	\begin{definition}
		An additive map $d:P \rightarrow P$ is said to be a commuting Jordan derivation if
		\begin{align*}
			& d(p^2)=d(p)p+pd(p)~\text{and}~d(p)p=pd(p),~\text{for all}~p\in P.
		\end{align*}
	\end{definition}
	
	\begin{definition}
		An additive map $D:P \rightarrow P$ is said to be a commuting Jordan left centralizer if
		\begin{align*}
			& D(p^2)=D(p)p~\text{and}~D(p)p=pD(p),~\text{for all}~p\in P.
		\end{align*}
	\end{definition}
	
	Let $e_1$ be a nontrivial idempotent of $P$ and $e_2=1-e_1$ where $1$ is the identity element of $P$, then by Peirce decomposition, we can write
	$$P=P_{11}+P_{12}+P_{21}+P_{22}~ \text { where } P_{i j}=e_iPe_j ~ \text{for}~ i, j \in \{1,2\}.$$
	Therefore, each $x \in P$ can be written as $x=a+m+t+b \text { where } a \in P_{11}, m \in P_{12}, t \in P_{21}, b \in P_{22}.$
	First, we discuss a few useful lemmas	before establishing the main results of this paper.
	
	\begin{lemma}
		If $d$ is a commuting Jordan derivation on a ring $P$, then
		\begin{align*}
			d(p^2) = 2d(p)p =2 pd(p),~ \text{for all} ~p \in P.
		\end{align*}
	\end{lemma}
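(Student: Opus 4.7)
The statement is essentially immediate from unpacking the two defining conditions of a commuting Jordan derivation, so the plan is just to assemble them cleanly rather than invent any new machinery.

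The plan is to start from the Jordan derivation identity $d(p^2) = d(p)p + pd(p)$, which holds for every $p\in P$ by the first clause of the definition of a commuting Jordan derivation. Next, I would invoke the second clause, namely the commuting condition $d(p)p = pd(p)$, valid for all $p\in P$. Substituting $pd(p)$ in the right-hand side of the Jordan identity by $d(p)p$ yields $d(p^2) = d(p)p + d(p)p = 2d(p)p$; symmetrically, substituting $d(p)p$ by $pd(p)$ gives $d(p^2) = pd(p) + pd(p) = 2pd(p)$. Chaining these two equalities produces the desired $d(p^2) = 2d(p)p = 2pd(p)$.

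There is really no obstacle here: the argument requires no $2$-torsion-freeness, no idempotents, and no primeness, only the additive structure of $P$ and the two bare defining equations. The only thing to watch is to present both substitutions (not just one), since the lemma asserts the double equality $2d(p)p = 2pd(p)$ as well, and to write the substitution step explicitly so that it is clear the commuting hypothesis $d(p)p = pd(p)$ is the one doing the work.
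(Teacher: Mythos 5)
Your proposal is correct and is essentially identical to the paper's proof: both apply the Jordan identity $d(p^2)=d(p)p+pd(p)$ and then substitute using the commuting condition $d(p)p=pd(p)$ in each direction to obtain $2d(p)p$ and $2pd(p)$. Nothing further is needed.
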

	
	\begin{proof}
		Since $d$ is a commuting Jordan derivation on $P$,
		\begin{align*}
			d(p^2)=d(p)
			p+pd(p)=d(p)p+d(p)p=2d(p)p, ~\text{for all}~p\in P.	\end{align*}
		Similarly, $d(p^2)=2pd(p)$, for all $p\in P$.
	\end{proof}
	\begin{lemma}
		[Lemma $2.1$ \cite{hos}]
		\label{lem1.2}
		Let $P$ be a $2$-torsion free ring and $d : P \rightarrow P$ be a commuting Jordan
		derivation. Then
		for any $p_1, p_2 \in P$, the following holds:		
		\begin{align*}
			& (i)~~ d(p_1 p_2+p_2 p_1) = 2(p_1d(p_2) + p_2d(p_1));\\
			& (ii)~~ d(p_1 p_2+p_2 p_1) = 2(d(p_1)p_2 + d(p_2)p_1);\\
			& (iii)~~ d(p_1p_2p_1) = 3d(p_1)p_2p_1 + d(p_2)p_1^2 -d(p_1)p_1p_2;\\
			& (iv)~~ d(p_1p_2p_1) = p_1^2d(p_2) + 3p_1p_2d(p_1) - p_2p_1d(p_1).
		\end{align*}
		
	\end{lemma}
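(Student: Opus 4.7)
The plan is to derive (i) and (ii) by linearizing the identity from the preceding lemma, $d(p^2) = 2pd(p) = 2d(p)p$, and then bootstrap these into (iii) and (iv) by expressing the triple product $p_1 p_2 p_1$ as a $\mathbb{Z}$-combination of Jordan anticommutators of lower complexity.

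For (i), I would substitute $p = p_1+p_2$ into $d(p^2) = 2pd(p)$. Additivity expands the left side as $d(p_1^2) + d(p_1p_2 + p_2p_1) + d(p_2^2)$, while the right side becomes $2p_1d(p_1) + 2p_1d(p_2) + 2p_2d(p_1) + 2p_2d(p_2)$. Cancelling the pure-square terms using the preceding lemma leaves (i). Identity (ii) is obtained in exactly the same way, starting from $d(p^2) = 2d(p)p$ instead.

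For (iii) and (iv), the key algebraic observation is
$$2\, p_1 p_2 p_1 \;=\; (p_1 p_2 + p_2 p_1)p_1 + p_1(p_1 p_2 + p_2 p_1) \;-\; (p_1^2 p_2 + p_2 p_1^2),$$
whose right side is a difference of two anticommutators $uv+vu$, first with $(u,v)=(p_1 p_2 + p_2 p_1,\, p_1)$ and then with $(u,v)=(p_1^2,\, p_2)$. Applying $d$ and using (i) to evaluate each anticommutator, then using (i) a second time to expand $d(p_1 p_2 + p_2 p_1)$ and the preceding lemma to expand $d(p_1^2)$, rewrites $2 d(p_1 p_2 p_1)$ as an explicit polynomial in $p_1, p_2, d(p_1), d(p_2)$ in which almost all cross-terms cancel. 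Dividing by $2$ using the $2$-torsion-freeness of $P$ produces (iv). Running the same argument with (ii) in place of (i) throughout yields (iii).

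The main obstacle is the bookkeeping: after full expansion the right side is a sum of roughly eight monomials of the form $p_1^a p_2 p_1^b d(p_1)$ and $p_1^a d(p_2) p_1^b$ (with symmetric versions for the other variant), and one must verify that the coefficients conspire to leave exactly the three prescribed terms in each identity, with a leftover factor of $2$ that is discharged by $2$-torsion-freeness at the very last step.
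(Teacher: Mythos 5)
Your proposal is correct: the linearization of $d(p^2)=2pd(p)=2d(p)p$ gives (i) and (ii), and I checked that your decomposition $2p_1p_2p_1=(p_1p_2+p_2p_1)p_1+p_1(p_1p_2+p_2p_1)-(p_1^2p_2+p_2p_1^2)$, after applying (i) (resp.\ (ii)) twice together with the preceding lemma, does collapse to $2$ times the right-hand side of (iv) (resp.\ (iii)), so $2$-torsion-freeness finishes the job. Note that the paper itself supplies no proof of this lemma --- it is imported verbatim as Lemma 2.1 of the cited reference \cite{hos} --- so there is nothing internal to compare against; your argument is the standard one and is presumably the proof given in that reference.
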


	\section{Commuting Jordan Derivation}
	\begin{theorem}
		\label{thm2.1}
		Let $P$ be a $2$-torsion free unital ring with a nontrivial idempotent element $e_1$ and $d : P \rightarrow P$ be a commuting Jordan derivation. Then $d$ is a zero map if it satisfies the following condition:	
		\begin{align*}
			[d(a)]_{11} = 0=[d(b)]_{22} , ~\text{for all}~a \in P_{11}~\text{and}~b \in P_{22}.
		\end{align*}

	\end{theorem}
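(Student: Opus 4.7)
The plan is to exploit the Peirce decomposition $P = P_{11} + P_{12} + P_{21} + P_{22}$ and show that $d$ vanishes on each Peirce summand separately; the two identities in Lemma \ref{lem1.2}, applied with $p_2 = e_1$ or $e_2$, will do essentially all the work. First I would pin down $d(e_1) = 0$: from $d(1) = d(1^2) = 2 d(1) \cdot 1$ one has $d(1) = 0$, and from the preliminary lemma $d(e_1) = 2 d(e_1) e_1$, combined with the hypothesis $[d(e_1)]_{11} = 0$, a direct comparison of Peirce components forces $d(e_1) = 0$ (the only term that survives on the right lies in $P_{21}$, and it then satisfies $y = 2y$). Consequently $d(e_2) = d(1) - d(e_1) = 0$ as well.

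Next, for $a \in P_{11}$ the identity $a e_1 + e_1 a = 2a$ fed into Lemma \ref{lem1.2}(i) gives $2 d(a) = 2 e_1 d(a)$, so $d(a) = e_1 d(a)$, which kills $[d(a)]_{21}$ and $[d(a)]_{22}$. Lemma \ref{lem1.2}(ii) on the same pair produces $d(a) = d(a) e_1$, which kills $[d(a)]_{12}$ and $[d(a)]_{22}$. Together with the hypothesis $[d(a)]_{11} = 0$, this forces $d(a) = 0$. The symmetric argument with $e_2$ handles $b \in P_{22}$ and yields $d(b) = 0$.

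Finally, for $m \in P_{12}$ the relation $e_1 m + m e_1 = m$ together with Lemma \ref{lem1.2}(i) (and $d(e_1) = 0$) gives $d(m) = 2 e_1 d(m)$; writing $d(m)$ as a sum of its four Peirce components and equating components on both sides, the $i=1$ components satisfy $y = 2y$ (hence vanish by $2$-torsion freeness) while the $i=2$ components appear only on the left and must also vanish. An analogous argument with $e_2 t + t e_2 = t$ handles $t \in P_{21}$. Since $d$ is additive and every $x \in P$ splits as $a + m + t + b$, we conclude $d \equiv 0$. There is no real obstacle beyond carefully tracking the four Peirce summands in each application of Lemma \ref{lem1.2}; in particular, primeness of $P$ is not required for this theorem, only the idempotent structure and $2$-torsion freeness.
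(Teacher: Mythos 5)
Your proposal is correct and follows essentially the same route as the paper: apply Lemma \ref{lem1.2} with $p_2 = e_1$ or $e_2$ to pin down the Peirce components of $d$ on each summand, after first showing $d(e_1)=d(e_2)=0$. The only cosmetic differences are that you obtain $d(e_1)=0$ via $d(1)=0$ together with the hypothesis $[d(e_1)]_{11}=0$ (the paper derives it unconditionally from the Jordan identity), and your observation that primeness is not needed here is accurate --- the theorem indeed assumes only a unital $2$-torsion free ring.
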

	In order to prove the result, we first establish some useful lemmas.
	\begin{lemma}
		\label{lem2.2}
		$d(e_1) = 0 ~\text{and} ~d(e_2)=0$.	
	\end{lemma}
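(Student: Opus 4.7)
The plan is to combine Lemma~1.3 with the Peirce decomposition of $d(e_1)$. Since $e_1$ is idempotent, applying the identity $d(p^2) = 2d(p)p = 2pd(p)$ from Lemma~1.3 at $p = e_1$ (so that $e_1^2 = e_1$) immediately gives
\[
d(e_1) \;=\; 2\,d(e_1)\,e_1 \;=\; 2\,e_1\,d(e_1).
\]

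Next I would decompose $d(e_1) = a + m + t + b$ with $a \in P_{11}$, $m \in P_{12}$, $t \in P_{21}$, $b \in P_{22}$. Standard computations using $e_1 e_1 = e_1$ and $e_1 e_2 = e_2 e_1 = 0$ yield
\[
d(e_1)\,e_1 \;=\; a + t, \qquad e_1\,d(e_1) \;=\; a + m.
\]
Substituting into $d(e_1) = 2\,d(e_1)\,e_1$ and matching Peirce components (using uniqueness of the decomposition together with the $2$-torsion freeness of $P$) forces $a = m = t = b = 0$, whence $d(e_1) = 0$.

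For $d(e_2) = 0$, the cleanest route is to set $p = 1$ in Lemma~1.3 to obtain $d(1) = 2\,d(1)\cdot 1 = 2\,d(1)$, hence $d(1) = 0$ by $2$-torsion freeness; then $d(e_2) = d(1) - d(e_1) = 0$. Alternatively, one can repeat the Peirce argument above verbatim with $e_1$ replaced by $e_2$. There is no substantive obstacle in this proof: the only care needed is in tracking the Peirce components of $d(e_1)e_1$ and $e_1 d(e_1)$ and in correctly invoking $2$-torsion freeness. It is worth noting that this lemma does not use the extra hypothesis $[d(a)]_{11} = 0 = [d(b)]_{22}$ from Theorem~\ref{thm2.1}; that hypothesis will be brought to bear in the later lemmas of the section.
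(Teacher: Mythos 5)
Your proof is correct and follows the same basic strategy as the paper's — the identity $d(e_1)=2d(e_1)e_1=2e_1d(e_1)$ from the first lemma of the introduction, combined with the Peirce decomposition of $d(e_1)$ — but your execution is leaner. Writing $d(e_1)=a+m+t+b$, the single equation $d(e_1)=2d(e_1)e_1$ reads $a+m+t+b=2a+2t$, which kills all four components at once ($m=b=0$ outright, and $a=2a$, $t=2t$ give $a=t=0$); note that, contrary to your parenthetical, no $2$-torsion freeness is actually needed for this step. The paper instead compares $2e_1d(e_1)$ with $2d(e_1)e_1$ to eliminate only the off-diagonal components (and there it genuinely uses $2$-torsion freeness), and then invokes the undoubled Jordan identity $d(e_1)=d(e_1)e_1+e_1d(e_1)$ to handle the diagonal ones, so your version saves a step. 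For $e_2$, your shortcut $d(1)=2d(1)\Rightarrow d(1)=0$ followed by $d(e_2)=d(1)-d(e_1)=0$ is valid since Theorem \ref{thm2.1} assumes $P$ is unital; the paper simply repeats the Peirce argument with $e_2$, as you also note. Your closing observation that the hypothesis $[d(a)]_{11}=0=[d(b)]_{22}$ plays no role in this lemma is likewise correct.
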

	\begin{proof}
		
		Since $e_1^2=e_1$,
		\begin{equation}
			\label{eq:2.1}
			\begin{aligned}
				d(e_1 ^2) &= 2e_1 d(e_1) = 2([d(e_1)]_{11} + [d(e_1)]_{12}).
			\end{aligned}
		\end{equation}
		
		Also,
		\begin{equation}
			\label{eq:2.2}
			\begin{aligned}
				d(e_1 ^2)        &= 2d(e_1)e_1 =  2 ([d(e_1)]_{11} + [d(e_1)]_{21}).
			\end{aligned}
		\end{equation}

		Comparing \eqref{eq:2.1} and \eqref{eq:2.2}, we have
		\begin{equation}
			\label{eq:2.3}
			[d(e_1)]_{12} = 0 = [d(e_1)]_{21}.
		\end{equation}
		
		Again,
		\begin{equation}
			\label{eq:2.4}
			\begin{aligned}
				& d\left(e_1\right)=d\left(e_1 \cdot e_1\right)=d\left(e_1\right) e_1+e_1 d\left(e_1\right)\\
				& \implies \left[d\left(e_1\right)\right]_{11}+\left[d\left(e_1\right)\right]_{12}  +\left[d\left(e_1\right)\right]_{21}+\left[d\left(e_1\right)\right]_{22}\\
				&=\left[d\left(e_1\right)\right]_{11}
				+\left[d\left(e_1\right)\right]_{21}+\left[d\left(e_1\right)\right]_{11}+\left[d\left(e_1\right)\right]_{12}\\
				& \implies 	\left[d\left(e_1\right)\right]_{11}-\left[d\left(e_1\right)\right]_{22}=0\\
				& \implies \left[d\left(e_1\right)\right]_{11}=0=\left[d\left(e_1\right)\right]_{22}.
			\end{aligned}
		\end{equation}
		Hence by \eqref{eq:2.3} and \eqref{eq:2.4},  $d(e_1)=0.$\\
		Similarly, we can prove $d\left(e_2\right)=0.$
	\end{proof}
	

	\begin{lemma}
		\label{lem2.3}
		Let $a \in P_{11}$. Then
		$d(a)=[d(a)]_{11}.$
	\end{lemma}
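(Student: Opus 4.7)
The plan is to combine Lemma \ref{lem2.2} (which gives $d(e_1)=0$) with the two Leibniz-type identities in Lemma \ref{lem1.2} to squeeze $d(a)$ by $e_1$ on each side. Since $a\in P_{11}$, one has $e_1 a = a e_1 = a$, and hence $ae_1+e_1 a=2a$, so that $d(ae_1+e_1 a)=2d(a)$. The target is to show that in fact $d(a)=e_1 d(a)=d(a)e_1$, which immediately forces $d(a)=e_1 d(a) e_1=[d(a)]_{11}$.

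For the left-hand squeeze, I would apply Lemma \ref{lem1.2}(i) with $p_1=a$ and $p_2=e_1$ to obtain
\[
d(ae_1+e_1 a)=2\bigl(a\,d(e_1)+e_1 d(a)\bigr)=2e_1 d(a),
\]
using $d(e_1)=0$. Comparing with $d(ae_1+e_1 a)=2d(a)$ and invoking $2$-torsion freeness gives $d(a)=e_1 d(a)$, which kills the $P_{21}$ and $P_{22}$ Peirce components of $d(a)$. The symmetric computation using Lemma \ref{lem1.2}(ii) instead yields
\[
d(ae_1+e_1 a)=2\bigl(d(a)e_1+d(e_1)a\bigr)=2d(a)e_1,
\]
and the same cancellation shows $d(a)=d(a)e_1$, killing the $P_{12}$ and $P_{22}$ components. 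Combining the two relations gives $d(a)=e_1 d(a) e_1=[d(a)]_{11}$.

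There is no real obstacle here: once Lemma \ref{lem2.2} is established, the argument is mechanical. The only small care required is to apply $2$-torsion freeness before cancelling the coefficient $2$ on both sides of each identity.
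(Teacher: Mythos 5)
Your proposal is correct and follows essentially the same route as the paper: both apply Lemma \ref{lem1.2}(i) and (ii) to $ae_1+e_1a=2a$ together with $d(e_1)=0$ from Lemma \ref{lem2.2}, cancel the factor $2$ by torsion freeness to get $d(a)=e_1d(a)=d(a)e_1$, and then read off the Peirce components. Your final step ($d(a)=e_1d(a)e_1=[d(a)]_{11}$) is just a slightly more compact packaging of the paper's component-by-component elimination.
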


	\begin{proof}
		As $a \in P_{11}$, by Lemma \ref{lem2.2},
		\begin{equation}
			\label{eq:2.5}
			\begin{aligned}
				2d(a)=d\left(a e_1+e_1 a\right) & =2\left(e_1 d(a)+a d(e_1)\right)=2 e_1 d(a).
			\end{aligned}
		\end{equation}
		
		Also, \begin{equation}
			\label{eq:2.6}
			\begin{aligned}
				2d(a)=d\left(a e_1+e_1 a\right) & =2\left(d(a) e_1+d\left(e_1\right) a\right)=2 d(a) e_1.
			\end{aligned}
		\end{equation}
		
		Comparing \eqref{eq:2.5} and \eqref{eq:2.6},
		\begin{equation}
			\label{eq:2.7}
			\begin{aligned}
				& d(a)= e_1 d(a)  = d(a) e_1 ~(\text{using 2-torsion freeness of}~ P)\\
				& \implies [d(a)]_{12}-[d(a)]_{21}=0 \\
				& \implies [d(a)]_{12}=0=[d(a)]_{21}.
			\end{aligned}
		\end{equation}

		By \eqref{eq:2.7},
		\begin{equation}
			\label{eq:2.8}
			d(a)=[d(a)]_{11}+[d(a)]_{22}.
		\end{equation}	
		
		By \eqref{eq:2.5},
		\begin{equation}
			\label{eq:2.9}
			\begin{aligned}
				& d(a)=e_1 d(a)\\
				& \implies [d(a)]_{11}+[d(a)]_{22}=[d(a)]_{11}\\
				& \implies [d(a)]_{22}=0.
			\end{aligned}
		\end{equation}	
		
		Hence, by \eqref{eq:2.7} and \eqref{eq:2.9}, $d(a)=[d(a)]_{11}.$
	\end{proof}
	
	
	\begin{lemma}
		\label{lem2.4}
		Let $m \in P_{12}$. Then
		$d(m)=0.$
	\end{lemma}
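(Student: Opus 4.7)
The plan is to mirror Lemma~\ref{lem2.3}. For $m\in P_{12}$ we have $e_1m=m$ and $me_1=0$, so the sum $me_1+e_1m$ collapses to $m$ itself. Applying Lemma~\ref{lem1.2}(i) with $p_1=m$, $p_2=e_1$ and invoking $d(e_1)=0$ from Lemma~\ref{lem2.2} then yields
\[
d(m)=d(me_1+e_1m)=2\bigl(md(e_1)+e_1d(m)\bigr)=2e_1d(m).
\]
Analogously, Lemma~\ref{lem1.2}(ii) produces $d(m)=2d(m)e_1$, which I would keep as a consistency check.

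Next I would feed in the Peirce decomposition $d(m)=[d(m)]_{11}+[d(m)]_{12}+[d(m)]_{21}+[d(m)]_{22}$. The right-hand side of the first identity equals $2[d(m)]_{11}+2[d(m)]_{12}$, which lies entirely in $P_{11}\oplus P_{12}$. Because the Peirce decomposition is an internal direct sum of additive subgroups, matching components in $P_{21}$ and $P_{22}$ forces $[d(m)]_{21}=0=[d(m)]_{22}$, while matching components in $P_{11}$ and $P_{12}$ gives $[d(m)]_{11}=2[d(m)]_{11}$ and $[d(m)]_{12}=2[d(m)]_{12}$; the $2$-torsion freeness of $P$ then makes these two components vanish as well. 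Thus $d(m)=0$.

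I do not anticipate a genuine obstacle. The only subtlety worth flagging is the numerical mismatch between this lemma and Lemma~\ref{lem2.3}: here $me_1+e_1m=m$, not $2m$ as one would get for a diagonal element, so when one applies Lemma~\ref{lem1.2}(i) the factor of $2$ survives on the right-hand side instead of cancelling. That surviving factor, combined with the direct-sum structure of the Peirce decomposition and $2$-torsion freeness, is precisely what forces $d(m)$ to be zero in one stroke. Note that the hypothesis $[d(a)]_{11}=0=[d(b)]_{22}$ of Theorem~\ref{thm2.1} is not needed for this off-diagonal lemma; it will presumably be invoked later when handling elements of $P_{11}$ and $P_{22}$ directly.
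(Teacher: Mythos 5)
Your proof is correct and follows essentially the same route as the paper: apply Lemma~\ref{lem1.2} to $e_1m+me_1=m$ with $d(e_1)=0$ from Lemma~\ref{lem2.2}, then compare Peirce components (the paper uses both identities (i) and (ii), whereas you extract everything from the single relation $d(m)=2e_1d(m)$, which is a harmless streamlining). One small quibble: the step $[d(m)]_{ij}=2[d(m)]_{ij}\implies[d(m)]_{ij}=0$ follows by subtraction in any ring and does not actually need $2$-torsion freeness.
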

	
	\begin{proof}
		Since $m \in P_{12}$,
		\begin{equation}
			\label{eq:2.10}
			\begin{aligned}
				d(m)=d\left(e_1 m+m e_1\right) & =2\left[d\left(e_1\right) m+d(m) e_1\right].
			\end{aligned}
		\end{equation}
		
		Also,
		\begin{equation}
			\label{eq:2.11}
			d(m) =2\left[e_1 d(m)+m d\left(e_1\right)\right].
		\end{equation}
		
		Comparing \eqref{eq:2.10} and \eqref{eq:2.11}, we get
		\begin{equation}
			\label{eq:2.12}
			[d(m)]_{21}=0=[d(m)]_{12}.
		\end{equation}

		From \eqref{eq:2.12}, we have
		\begin{equation}
			\label{eq:2.13}
			d(m)=[d(m)]_{11}+[d(m)]_{22}.
		\end{equation}
		
		Also, from \eqref{eq:2.10},
		\begin{equation}
			\label{eq:2.14}
			\begin{aligned}
				& d(m)=2 d(m) e_1 \\
				\implies & {[d(m)]_{11}+[d(m)]_{22}=2[d(m)]_{11} } \\
				\implies & {[d(m)]_{11}-[d(m)]_{22}=0 }\\
				\implies & {[d(m)]_{11}=[d(m)]_{22}=0 }.
			\end{aligned}
		\end{equation}

		By \eqref{eq:2.12} and \eqref{eq:2.14}, we get $d(m)=0.$
	\end{proof}

	\begin{lemma}
		\label{lem2.5}
		Let $t \in P_{21}$. Then
		$ d(t)=0.$
	\end{lemma}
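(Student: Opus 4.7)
The plan is to carry over the proof of Lemma \ref{lem2.4} with the left and right roles of $e_1$ interchanged. For $t\in P_{21}$ we have $e_1 t=0$ and $t e_1=t$, so $e_1 t + t e_1 = t$. Feeding this into parts (i) and (ii) of Lemma \ref{lem1.2} (with $p_1=e_1$, $p_2=t$) and invoking $d(e_1)=0$ from Lemma \ref{lem2.2} should produce the twin identities
\[d(t)=2\,e_1 d(t) \quad\text{and}\quad d(t)=2\,d(t)e_1,\]
which are the exact analogues of \eqref{eq:2.10} and \eqref{eq:2.11}.

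Peirce bookkeeping then finishes the job. The first identity places $d(t)$ inside $e_1 P = P_{11}+P_{12}$, forcing $[d(t)]_{21}=0=[d(t)]_{22}$; the second places it inside $P e_1 = P_{11}+P_{21}$, forcing $[d(t)]_{12}=0=[d(t)]_{22}$. Together, these collapse $d(t)$ onto its $(1,1)$-block, giving $d(t)=[d(t)]_{11}$. Substituting back into $d(t)=2d(t)e_1$ reduces this to $[d(t)]_{11}=2[d(t)]_{11}$, and $2$-torsion-freeness yields $[d(t)]_{11}=0$, so $d(t)=0$.

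I anticipate no genuine obstacle: the argument is the left/right mirror of Lemma \ref{lem2.4} and requires neither $[d(a)]_{11}=0$ nor $[d(b)]_{22}=0$ from Theorem \ref{thm2.1}. The only care required is correctly tracking which Peirce components are annihilated by left versus right multiplication by $e_1$, and a brief check against the $P_{12}$ case should be enough to confirm the directions.
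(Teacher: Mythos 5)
Your proof is correct and is essentially the paper's own argument in mirror image: the paper writes $t=e_2t+te_2$ and uses $d(e_2)=0$, while you write $t=e_1t+te_1$ and use $d(e_1)=0$, but both apply Lemma \ref{lem1.2}(i)--(ii) and the same Peirce bookkeeping to kill every component of $d(t)$. (Only trivial quibble: the final step $[d(t)]_{11}=2[d(t)]_{11}\implies[d(t)]_{11}=0$ follows by subtraction and does not actually need $2$-torsion-freeness.)
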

	
	\begin{proof}
		Since $t \in P_{21}$,	
		\begin{equation}
			\label{eq:2.15}
			\begin{aligned}
				d(t)=d\left(e_2 t+t e_2\right) & =2\left[d\left(e_2\right) t+d(t) e_2\right].
			\end{aligned}
		\end{equation}
		
		Also,
		\begin{equation}
			\label{eq:2.16}
			d(t) =2\left[e_2 d(t)+t d\left(e_2\right)\right].
		\end{equation}
		
		Comparing \eqref{eq:2.15} and \eqref{eq:2.16}, we have
		\begin{equation}
			\label{eq:2.17}
			[d(t)]_{21}=0=[d(t)]_{12}.
		\end{equation}	
		
		From \eqref{eq:2.17},
		\begin{equation}
			\label{eq:2.18}
			d(t)=[d(t)]_{11}+[d(t)]_{22}.
		\end{equation}
		
		Also, from \eqref{eq:2.18},
		\begin{equation}
			\label{eq:2.19}
			\begin{aligned}
				& d(t)=2 e_2d(t) \\
				\implies & {[d(t)]_{11}+[d(t)]_{22}=2[d(t)]_{22} } \\
				\implies & {[d(t)]_{11}-[d(t)]_{22}=0 }\\
				\implies & {[d(t)]_{11}=[d(t)]_{22}=0 }.
			\end{aligned}
		\end{equation}

		By \eqref{eq:2.17} and \eqref{eq:2.19}, we obtain $d(t)=0.$
	\end{proof}

	\begin{lemma}
		\label{lem2.6}
		Let $b \in P_{22}$. Then
		$d(b)=[d(b)]_{22}.$
	\end{lemma}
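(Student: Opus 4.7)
The plan is to mirror the proof of Lemma \ref{lem2.3} verbatim, swapping the roles of $e_1$ and $e_2$. Since $b \in P_{22}$, we have $be_2 = e_2 b = b$, so $be_2 + e_2 b = 2b$. Applying parts (i) and (ii) of Lemma \ref{lem1.2} to the pair $(b,e_2)$, together with Lemma \ref{lem2.2} which gives $d(e_2) = 0$, I would obtain
\begin{equation*}
2 d(b) \;=\; d(be_2 + e_2 b) \;=\; 2\bigl(e_2 d(b) + b\, d(e_2)\bigr) \;=\; 2 e_2 d(b),
\end{equation*}
and likewise $2 d(b) = 2 d(b) e_2$. Using the $2$-torsion freeness of $P$, this yields $d(b) = e_2 d(b) = d(b) e_2$.

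Next I would read off the Peirce components. The equality $e_2 d(b) = d(b) e_2$ forces $[d(b)]_{12} = 0 = [d(b)]_{21}$, so $d(b) = [d(b)]_{11} + [d(b)]_{22}$. Then substituting into $d(b) = e_2 d(b)$ gives $[d(b)]_{11} + [d(b)]_{22} = [d(b)]_{22}$, hence $[d(b)]_{11} = 0$. Combining these conclusions leaves $d(b) = [d(b)]_{22}$, as desired.

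I do not anticipate a real obstacle here; the argument is structurally identical to Lemma \ref{lem2.3}, and the only ingredients needed are Lemma \ref{lem1.2}(i)--(ii), Lemma \ref{lem2.2}, and the $2$-torsion freeness of $P$. The one thing to be careful about is that the roles of left/right multiplication by $e_2$ interact correctly with the Peirce indexing, but this is a routine bookkeeping check rather than a genuine difficulty.
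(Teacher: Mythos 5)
Your proposal is correct and follows exactly the same route as the paper: the paper's own proof of Lemma \ref{lem2.6} is precisely the mirror of Lemma \ref{lem2.3} with $e_1$ replaced by $e_2$, deriving $d(b)=e_2d(b)=d(b)e_2$ from Lemma \ref{lem1.2} and Lemma \ref{lem2.2}, killing the off-diagonal Peirce components, and then extracting $[d(b)]_{11}=0$ from $d(b)=e_2d(b)$. No discrepancies to report.
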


	\begin{proof}
		As $b \in P_{22}$, by Lemma \ref{lem2.2},
		\begin{equation}
			\label{eq:2.20}
			\begin{aligned}
				2d(b)=d\left(b e_2+e_2 b\right) & =2\left(e_2 d(b)+b d(e_2)\right)=2 e_2 d(b).
			\end{aligned}
		\end{equation}
		
		Also, \begin{equation}
			\label{eq:2.21}
			\begin{aligned}
				2d(b)=d\left(b e_2+e_2 b\right) & =2\left(d(b) e_2+d\left(e_2\right) b\right)=2 d(b) e_2.
			\end{aligned}
		\end{equation}
		
		Comparing \eqref{eq:2.20} and \eqref{eq:2.21}, we get
		\begin{equation}
			\label{eq:2.22}
			\begin{aligned}
				& d(b)= e_2 d(b)  = d(b) e_2 ~(\text{using 2-torsion freeness of}~ R)\\
				& \implies [d(b)]_{12}-[d(b)]_{21}=0 \\
				& \implies [d(b)]_{12}=0=[d(b)]_{21}.
			\end{aligned}
		\end{equation}

		By \eqref{eq:2.22},
		\begin{equation}
			\label{eq:2.23}
			d(b)=[d(b)]_{11}+[d(b)]_{22}.
		\end{equation}	
		
		By \eqref{eq:2.20} and \eqref{eq:2.23}, we have
		\begin{equation}
			\label{eq:2.24}
			\begin{aligned}
				& d(b)=e_2 d(b)\\
				& \implies [d(b)]_{11}+[d(b)]_{22}=[d(b)]_{22}\\
				& \implies [d(b)]_{11}=0.
			\end{aligned}
		\end{equation}	
		
		Hence, by \eqref{eq:2.22} and \eqref{eq:2.24}, $d(b)=[d(b)]_{22}.$
	\end{proof}
	Now, we prove Theorem \ref{thm2.1}.
	\begin{proof}[\textbf {Proof of Theorem \ref{thm2.1}}]
		Let $x\in P$. Then
		\begin{align*}
			x=a+m+t+b, ~\text{for some}~ a\in P_{11},~m\in P_{12},~t\in P_{21} ~\text{and}~b\in P_{22}.
		\end{align*}
		
		By using Lemmas \ref{lem2.3}, \ref{lem2.4}, \ref{lem2.5} and \ref{lem2.6}, we have
		\begin{align*}
			d(x)=[d(a)]_{11}+[d(b)]_{22}=0~\text{(by the assumption)}.
		\end{align*}
		Hence, $d=0$.
	\end{proof}
	
	\begin{cor}
		\label{cor2.7}
		Let $P$ be a $2$-torsion free prime ring with a non-trivial idempotent $e_1$ and $d : P \rightarrow P$ be a commuting Jordan derivation. Then, $d$ is a zero map.
	\end{cor}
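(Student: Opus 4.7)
The plan is to reduce the corollary to Theorem \ref{thm2.1} by verifying that the hypothesis $[d(a)]_{11}=0=[d(b)]_{22}$ is automatic over a prime ring. Lemmas \ref{lem2.3}--\ref{lem2.6} already give $d(a)=[d(a)]_{11}$ and $d(b)=[d(b)]_{22}$, as well as $d\equiv 0$ on $P_{12}\cup P_{21}$; what remains is to exploit the commuting identity $d(p)p=pd(p)$ on elements that mix Peirce blocks, then invoke primeness to kill $[d(a)]_{11}$ and $[d(b)]_{22}$.

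For the first component, fix $a\in P_{11}$ and let $m\in P_{12}$ be arbitrary. Setting $p=a+m$, Lemmas \ref{lem2.3} and \ref{lem2.4} give $d(p)=[d(a)]_{11}$. Expanding both sides of $d(p)p=pd(p)$ and using the block relations $P_{ij}P_{jk}\subseteq P_{ik}$ and $P_{ij}P_{kl}=0$ for $j\neq k$, the summand $m[d(a)]_{11}\in P_{12}P_{11}=0$ on the right vanishes, whereas $[d(a)]_{11}m\in P_{12}$ is the only $(1,2)$-contribution on the left. Comparing $(1,2)$-entries yields $[d(a)]_{11}m=0$ for every $m\in P_{12}$, that is, $[d(a)]_{11}\,e_1Pe_2=0$. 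Since $[d(a)]_{11}e_1=[d(a)]_{11}$, this is $[d(a)]_{11}Pe_2=0$, and primeness together with $e_2\neq 0$ (as $e_1$ is nontrivial) forces $[d(a)]_{11}=0$.

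The argument for $b\in P_{22}$ is symmetric: choosing again $p=b+m$ with $m\in P_{12}$, Lemmas \ref{lem2.6} and \ref{lem2.4} give $d(p)=[d(b)]_{22}$. This time the roles flip---$[d(b)]_{22}m\in P_{22}P_{12}=0$ vanishes, while $m[d(b)]_{22}\in P_{12}$ persists---so the $(1,2)$-component of $d(p)p=pd(p)$ delivers $m[d(b)]_{22}=0$ for all $m\in P_{12}$. Hence $e_1P[d(b)]_{22}=0$, and primeness with $e_1\neq 0$ yields $[d(b)]_{22}=0$. Both hypotheses of Theorem \ref{thm2.1} are thus satisfied, and we conclude $d=0$.

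I do not anticipate a real obstacle: the one thing to execute carefully is the bookkeeping of which Peirce block each product lands in, so that the correct component equation is extracted after applying the commuting identity. The choice of test element $p=a+m$ (respectively $p=b+m$) with $m\in P_{12}$ is what makes exactly one side of the commutation relation contribute to the off-diagonal block, giving the desired annihilator condition on which primeness can bite.
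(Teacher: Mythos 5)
Your proposal is correct and follows essentially the same route as the paper: reduce to Theorem \ref{thm2.1} via Lemmas \ref{lem2.3}--\ref{lem2.6}, extract a one-sided annihilation relation from the commuting identity across Peirce blocks, and invoke primeness to kill $[d(a)]_{11}$ and $[d(b)]_{22}$. The only cosmetic differences are that you pair $a\in P_{11}$ with $P_{12}$ (the paper pairs it with $P_{21}$) and that you obtain the key relation $d(a)m=md(a)$ by linearizing $d(p)p=pd(p)$ directly at $p=a+m$ rather than citing Lemma \ref{lem1.2}; both yield the same conclusion.
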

	
	\begin{proof}Let $x\in P$. Then
		\begin{align*}
			x=a+m+t+b, ~\text{for some}~ a\in P_{11},~m\in P_{12},~t\in P_{21} ~\text{and}~b\in P_{22}.
		\end{align*}
		
		Using the proof of Theorem \ref{thm2.1}, we have
		\begin{equation}
			\label{eq:2.25}
			d(x)=[d(a)]_{11}+[d(b)]_{22}.
		\end{equation}
		
		By Lemmas \ref{lem1.2}, \ref{lem2.3} and \ref{lem2.5}, we get
		\begin{equation}
			\label{eq:2.26}
			\begin{aligned}
				& d(at+ta)=2td(a)=2d(a)t\\
				& \implies td(a)=d(a)t ~\text{(Since}~P ~\text{is 2-torsion free)}\\
				& \implies t[d(a)]_{11}=[d(a)]_{11}t=0\\
				& \implies e_2R[d(a)]_{11}=0\\
				& \implies [d(a)]_{11}=0 ~\text{(Since}~P ~\text{is prime and}~ e_2\neq 0).
			\end{aligned}
		\end{equation}
		
		Also, by Lemmas \ref{lem1.2}, \ref{lem2.4} and \ref{lem2.6}, we have
		\begin{equation}
			\label{eq:2.27}
			\begin{aligned}
				& d(mb+bm)=2md(b)=2d(b)m\\
				& \implies md(b)=d(b)m ~\text{(Since}~P ~\text{is 2-torsion free)}\\
				& \implies m[d(b)]_{22}=[d(b)]_{22}m=0\\
				& \implies e_1R[d(b)]_{22}=0\\
				& \implies [d(b)]_{22}=0 ~\text{(Since}~P ~\text{is prime and}~ e_1\neq 0).
			\end{aligned}
		\end{equation}
		
		Again, from \eqref{eq:2.25}, \eqref{eq:2.26} and \eqref{eq:2.27}, we have
		\begin{align*}
			d(x)=0 ~\text{for all}~ x\in P.
		\end{align*}
		
		Hence, $d$ is a zero map.
	\end{proof}

	\begin{cor}
		Let $P$ be a 2-torsion free prime ring with a non-trivial idempotent $e_{1}$ such that ${\{d_n\}}_{n=0}^{\infty}$ is a commuting Jordan higher derivation on $P,$ that is, $d_n (x^2) = \sum_{k=0}^{n} d_{n-k}(x)d_k (x)$, $d_0(x)=x$ and $[d_n(x),x]=0$ $([a,b]=ab-ba)$, for all non-negative integers $n$ and $x \in P$. Then $d_n =0$ for all $n \in \mathbb{N}$.
	\end{cor}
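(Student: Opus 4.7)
The plan is to prove the statement by induction on $n$, using Corollary \ref{cor2.7} as a black box. At each stage, the inductive hypothesis collapses the higher-derivation identity $d_n(x^2)=\sum_{k=0}^{n}d_{n-k}(x)d_k(x)$ into the ordinary Jordan-derivation identity for $d_n$, and combined with the given commuting condition $[d_n(x),x]=0$, this lets us invoke the corollary.

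For the base case $n=1$, the defining relation gives $d_1(x^2)=d_1(x)d_0(x)+d_0(x)d_1(x)=d_1(x)x+xd_1(x)$, so $d_1$ is a Jordan derivation. The assumption $[d_1(x),x]=0$ makes $d_1$ a commuting Jordan derivation on the $2$-torsion free prime ring $P$ with non-trivial idempotent $e_1$, hence $d_1=0$ by Corollary \ref{cor2.7}.

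For the inductive step, assume $d_1=d_2=\cdots=d_{n-1}=0$. In the sum
\begin{equation*}
d_n(x^2)=d_n(x)d_0(x)+d_0(x)d_n(x)+\sum_{k=1}^{n-1}d_{n-k}(x)d_k(x),
\end{equation*}
every intermediate term with $1\le k\le n-1$ vanishes because both $n-k$ and $k$ lie in $\{1,\dots,n-1\}$, so by the inductive hypothesis $d_{n-k}(x)d_k(x)=0$. Hence $d_n(x^2)=d_n(x)x+xd_n(x)$, showing that $d_n$ is an additive Jordan derivation. Together with the assumed commuting condition $[d_n(x),x]=0$, this means $d_n$ is a commuting Jordan derivation on $P$, and Corollary \ref{cor2.7} yields $d_n=0$, completing the induction.

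The argument is essentially bookkeeping, and I do not anticipate any genuine obstacle; the only subtlety to double-check is the index bookkeeping in the collapsing sum, specifically that the two surviving terms are exactly the $k=0$ and $k=n$ terms (corresponding to $d_n(x)x$ and $xd_n(x)$) and that every other term vanishes because at least one factor is a $d_j$ with $1\le j\le n-1$. Once this is verified, Corollary \ref{cor2.7} delivers the conclusion immediately at every step.
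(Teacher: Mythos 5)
Your proposal is correct and follows essentially the same route as the paper: induction on $n$, where the inductive hypothesis kills every intermediate term in $\sum_{k=0}^{n} d_{n-k}(x)d_k(x)$, leaving $d_n(x)x + xd_n(x)$, so that $d_n$ is a commuting Jordan derivation and Corollary \ref{cor2.7} applies. The only cosmetic difference is that the paper also writes out the $n=2$ case explicitly before the general step.
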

	
	\begin{proof}
		We use induction on $n$ to prove this result. For $n=1$,
		\begin{align*}
			d_1 (x^2) = d_1 (x)d_0(x) +  d_0(x)d_1 (x) = d_1 (x)x + xd_1 (x),~\text{for all}~x\in P.	
		\end{align*}
		Hence, $d_1$ is a Jordan derivation on $P$. Since $[d_1(x), x]= 0$ for all $x \in P$, $d_1$ is a commuting Jordan derivation on $P$. It follows from Corollary \ref{cor2.7} that $d_1$ is a zero map.
		For $n=2$,
		\begin{align*}
			d_2 (x^2) = d_2 (x)x + (d_1 (x))^2 + xd_2 (x) = d_2 (x)x + xd_2 (x),~\text{for all}~x\in P.	
		\end{align*}
		Therefore, $d_2$ is a Jordan derivation on $P$. Since $[d_2(x), x]= 0$ for all $x \in P$, $d_2$ is a commuting Jordan derivation on $P$. It follows from Corollary \ref{cor2.7} that $d_2$ is a zero map.
		Let $n$ be an arbitrary positive integer and assume that the result holds for any $k < n$. To prove the result for $n$, we have
		\begin{align*}
			d_n (x^2) = \sum_{k=0}^{n} d_{n-k}(x)d_k (x)  = d_n (x)x + xd_n (x),~\text{for all}~x\in P.
		\end{align*}
		Hence, $d_n$ is a Jordan derivation on $P$. Since $[d_n(x), x]= 0$ for all $x \in P$, $d_n$ is a commuting Jordan derivation on $P$. It follows from Corollary \ref{cor2.7} that $d_n$ is a zero map.
		
	\end{proof}
	
	\section{Commuting Generalized Jordan Derivations Over a Ring}
	\begin{theorem}
		\label{thm3.1}
		If $D$ is a commuting Jordan left centralizer over a $2$-torsion free prime ring $P$ with a nontrivial idempotent $e_1$, then $D$ is a zero map on $P$ if it satisfies the following condition:
		$$[D(m)]_{12} = 0 ~\text{for ~all} ~m \in P_{12}.$$
	\end{theorem}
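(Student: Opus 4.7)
The plan is to exploit unitality of $P$ to reduce the problem to showing $D(1)=0$. Concretely, I would first linearize the Jordan left-centralizer identity $D(p^2)=D(p)p$ via $p\mapsto p_1+p_2$ to obtain
\[D(p_1p_2+p_2p_1)=D(p_1)p_2+D(p_2)p_1,\]
and similarly linearize the companion identity $D(p^2)=pD(p)$ (which holds because the commuting hypothesis combines with the Jordan left-centralizer identity) to obtain
\[D(p_1p_2+p_2p_1)=p_1D(p_2)+p_2D(p_1).\]
Taking $p_2=1$ in each identity and invoking $2$-torsion freeness then yields $D(p_1)=D(1)p_1=p_1D(1)$, so the element $c:=D(1)$ is central and $D$ is merely left multiplication by $c$.

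Next I would analyze $c$ via the Peirce decomposition. Centrality forces $ce_1=e_1c$, hence $[c]_{12}=[c]_{21}=0$, so $c=\alpha+\beta$ with $\alpha\in P_{11}$ and $\beta\in P_{22}$. For any $m\in P_{12}$, $\beta m\in P_{22}P_{12}=0$, so $D(m)=cm=\alpha m\in P_{12}$. The hypothesis $[D(m)]_{12}=0$ therefore forces $\alpha m=0$ for every $m\in P_{12}$, equivalently $\alpha Pe_2=0$; primeness of $P$ together with $e_2\neq 0$ then gives $\alpha=0$.

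Finally, with $c=\beta\in P_{22}\cap Z(P)$, centrality gives $\beta m=m\beta$ for every $m\in P_{12}$; since $\beta m\in P_{22}P_{12}=0$ we conclude $m\beta=0$ for all $m\in P_{12}$, that is $e_1P\beta=0$, so primeness and $e_1\neq 0$ kill $\beta$ as well. Hence $c=0$ and $D\equiv 0$. The step I anticipate as the main delicate point is the pair of linearizations, where one must verify that substituting $p_2=1$ is legitimate and that the two linearizations yield genuinely independent information rather than a tautology; once $D$ is identified with multiplication by a central element, the rest is a short two-step primeness argument that bypasses the block-by-block lemma chain used in Section 2 entirely.
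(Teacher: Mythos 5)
Your argument is correct, and it takes a genuinely different route from the paper. The paper never touches $D(1)$: it invokes Proposition 2.5 of Bre\v{s}ar--Zalar to upgrade the commuting Jordan left centralizer to a two-sided centralizer $D(xy)=D(x)y=xD(y)$, and then runs four separate lemmas (one per Peirce block $P_{11}$, $P_{12}$, $P_{21}$, $P_{22}$), each combining the centralizer identity with the hypothesis $[D(m)]_{12}=0$ and a primeness argument, before summing the blocks. You instead exploit the unit directly -- which is legitimate here, since the paper's entire Peirce framework already presupposes $e_2=1-e_1$ -- and your two linearizations do give independent information: setting $p_2=1$ yields $D(p)=D(1)p$ from one and $D(p)=pD(1)$ from the other, so $c=D(1)$ is central and $D$ is multiplication by $c$. (In fact $2$-torsion freeness is not even needed at that step, since $2D(p)=D(p)+cp$ gives $D(p)=cp$ by cancellation.) From there your two primeness applications, first killing $e_1ce_1$ via the hypothesis on $P_{12}$ and then killing $e_2ce_2$ via centrality, are both sound ($\alpha Pe_2=0$ and $e_1P\beta=0$ are correctly extracted using $\alpha e_1=\alpha$ and $e_2\beta=\beta$). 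What your approach buys is self-containedness and brevity: it avoids the external citation and the block-by-block lemma chain, and it makes transparent exactly where the hypothesis $[D(m)]_{12}=0$ is indispensable (without it, $D=\mathrm{id}$ on $M_2(k)$ is a nonzero commuting Jordan left centralizer). What the paper's route buys is a template that does not depend on reducing $D$ to multiplication by $D(1)$ and hence parallels the non-unital centralizer literature more closely, though in the present unital setting that generality is not exploited.
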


	Since $D$ is commuting Jordan centralizer over a $2$-torsion free prime ring $P$, then by Proposition $2.5$ of \cite{bre0}, $D$ becomes a two-sided centralizer. We use this to prove the following lemmas.
	
	\begin{lemma}
		\label{lem3.1}
		For any $m \in P_{12}$, we have $D(m) =0.$
	\end{lemma}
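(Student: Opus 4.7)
The plan is to exploit the fact, just recorded in the paragraph preceding the lemma, that $D$ is actually a two-sided centralizer on $P$, i.e.\ $D(xy) = D(x)y = xD(y)$ for every $x,y\in P$. Once this is in hand, proving $D(m)=0$ for $m\in P_{12}$ reduces to showing $D(m)\in P_{12}$, after which the standing hypothesis $[D(m)]_{12}=0$ finishes the job.

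First I would analyze $D(e_1)$. Applying the two-sided centralizer identity to $e_1 = e_1 \cdot e_1$ gives $D(e_1) = D(e_1)e_1 = e_1 D(e_1)$, so $D(e_1)$ both left- and right-multiplies into itself with $e_1$. A short Peirce calculation then forces $D(e_1)\in P_{11}$. (The symmetric argument yields $D(e_2)\in P_{22}$, although only one of these will be needed.)

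Next, for any $m\in P_{12}$ we have $m = e_1 m$, so the two-sided centralizer property gives
\begin{equation*}
D(m) \;=\; D(e_1 m) \;=\; D(e_1)\,m.
\end{equation*}
Since $D(e_1)\in P_{11}$ and $m\in P_{12}$, the product lies in $P_{11}P_{12}\subseteq P_{12}$. Therefore $D(m) = [D(m)]_{12}$, which by hypothesis is $0$.

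I do not expect a real obstacle here: the entire argument is driven by the upgrade from Jordan left centralizer to two-sided centralizer (cited from \cite{bre0}), and once that upgrade is invoked the Peirce placement of $D(e_1)$ and of $D(m)$ falls out immediately. If one wanted to avoid citing \cite{bre0}, the hard step would be reproving that upgrade in the present setting; but since the paper has already installed it just above the lemma, the proof itself should be only a few lines.
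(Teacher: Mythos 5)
Your proof is correct and follows essentially the same route as the paper: both arguments rest on the upgrade of $D$ to a two-sided centralizer via \cite{bre0}, deduce that $D(m)$ lies in $P_{12}$, and then invoke the hypothesis $[D(m)]_{12}=0$. The only difference is cosmetic — the paper locates $D(m)$ directly from $D(m)=e_1D(m)=D(m)e_2$, while you route through the observation $D(e_1)\in P_{11}$ and the identity $D(m)=D(e_1)m$; both are valid.
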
	
	\begin{proof}
		Let $m \in P_{12}$. Then
		\begin{equation}
			\label{eq:3.1}
			D(m) = D(me_2) =D(m)e_2 =[D(m)]_{12} + [D(m)]_{22}.
		\end{equation}
		Also, \begin{equation}
			\label{eq:3.2}
			D(m) =D(e_1m)= e_1D(m) =[D(m)]_{11} + [D(m)]_{12}.
		\end{equation}
		Therefore, from \eqref{eq:3.1} and \eqref{eq:3.2}, we obtain
		\begin{equation}
			\label{eq:3.0}
			[D(m)]_{11} = 0 = [D(m)]_{22}.
		\end{equation}
		Now, \begin{align*}
			&D(m)=D(me_2)\\
			&\implies [D(m)]_{12} +[D(m)]_{21} = [D(m)]_{12} + [D(m)]_{22}\\
			&\implies [D(m)]_{21} = 0 ~\text{(by~ using~ equation ~\eqref{eq:3.0})}.
		\end{align*}
		By assumption, $[D(m)]_{12} = 0$ for all $m \in P_{12}.$ \\
		Thus, $D(m) =0$ for all $m \in P_{12}.$	
	\end{proof}	
	
	\begin{lemma}
		\label{lem3.4}
		For any $a \in P_{11}$, we have $D(a) =0.$
	\end{lemma}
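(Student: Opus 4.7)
The plan is to mirror the Peirce-decomposition analysis of Lemma \ref{lem3.1} to force $D(a)$ to live in the $(1,1)$-block, then use Lemma \ref{lem3.1} together with the two-sided centralizer property to kill even that block by primeness. Throughout I will freely use the fact already recorded before Lemma \ref{lem3.1} that $D$ is actually a two-sided centralizer, so $D(xy)=D(x)y=xD(y)$ for every $x,y\in P$.

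First, since $a=e_{1}a=ae_{1}$, the centralizer relation gives on one hand
\begin{equation*}
D(a)=D(ae_{1})=D(a)e_{1}=[D(a)]_{11}+[D(a)]_{21},
\end{equation*}
and on the other hand
\begin{equation*}
D(a)=D(e_{1}a)=e_{1}D(a)=[D(a)]_{11}+[D(a)]_{12}.
\end{equation*}
Comparing these with the full Peirce expansion $D(a)=\sum_{i,j}[D(a)]_{ij}$ forces $[D(a)]_{12}=[D(a)]_{21}=[D(a)]_{22}=0$, so that $D(a)=[D(a)]_{11}$.

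Second, pick an arbitrary $m\in P_{12}$. Then $am\in P_{12}$ (since $P_{11}P_{12}\subseteq P_{12}$), so Lemma \ref{lem3.1} gives $D(am)=0$. But the centralizer property also yields $D(am)=D(a)m$, hence $D(a)m=0$ for every $m\in P_{12}$. Writing $m=e_{1}re_{2}$ for arbitrary $r\in P$ and using $[D(a)]_{11}e_{1}=[D(a)]_{11}$, this collapses to $[D(a)]_{11}\,P\,e_{2}=0$. Since $P$ is prime and $e_{2}\neq 0$, we conclude $[D(a)]_{11}=0$, and therefore $D(a)=0$.

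The argument has no serious obstacle: the Peirce step is identical in spirit to Lemma \ref{lem3.1}, and the only new ingredient is that, unlike for $m\in P_{12}$, no hypothesis directly eliminates the diagonal block $[D(a)]_{11}$. The point worth flagging is that the primeness of $P$ (together with the nontriviality of $e_{2}$) is exactly what supplies this missing step, via the bridge $am\in P_{12}$ that lets Lemma \ref{lem3.1} act on the diagonal piece.
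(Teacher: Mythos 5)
Your proof is correct and follows essentially the same route as the paper: the same Peirce-block analysis via $D(a)=D(a)e_1=e_1D(a)$ to reduce to $D(a)=[D(a)]_{11}$, followed by the same primeness argument using $D(a)m=0$ for $m\in P_{12}$. The only (harmless) difference is that you obtain $D(am)=0$ directly by noting $am\in P_{12}$ and invoking Lemma \ref{lem3.1}, whereas the paper reaches the same identity by expanding $D(am+ma)=aD(m)+mD(a)$ with the two-sided centralizer property.
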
	
	
	\begin{proof}
		Since $a \in P_{11}$, we have
		\begin{equation}
			\label{eq:3.3}
			D(a) = D(ae_1) =D(a)e_1 =[D(a)]_{11} + [D(a)]_{21}.
		\end{equation}
		Also, \begin{equation}
			\label{eq:3.4}
			D(a) = D(e_1a) = e_1D(a) =[D(a)]_{11} + [D(a)]_{12}.
		\end{equation}
		
		Therefore, from \eqref{eq:3.3} and \eqref{eq:3.4}, we obtain
		\begin{equation}
			\label{eq:3.06}
			[D(a)]_{12} = 0 = [D(a)]_{21}.
		\end{equation}
		Now,
		\begin{align*}
			&D(a)=D(ae_1)\\
			&\implies [D(a)]_{11} +[D(a)]_{22} = [D(a)]_{11}\\
			&\implies [D(a)]_{22} = 0.
		\end{align*}
		
		Also, for $m \in P_{12}$, we obtain
		\begin{align*}
			&D( am + ma) = aD(m)  + mD(a)\\
			& \implies D( am + ma) = 0 + m[D(a)]_{11} ~\text{(by ~using ~Lemma ~\ref{lem3.1})}\\
			& \implies D(am) =0\\
			& \implies D(a) m =0\\
			& \implies [D(a)]_{11} me_2 =0~\text{(for~ all}~ m \in P_{12})\\
			& \implies [D(a)]_{11}Pe_2 =0\\
			& \implies [D(a)]_{11} =0 ~\text{(since ~$P$~ is ~prime)}.
		\end{align*}
		Thus, by equation \eqref{eq:3.06}, $D(a) =0$ for all $a \in P_{11}.$ 	
	\end{proof}

	\begin{lemma}
		\label{lem3.3}
		For all $t \in P_{21}$, we have $D(t) =0.$
	\end{lemma}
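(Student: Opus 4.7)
The plan is to mirror the Peirce-decomposition structure of Lemma \ref{lem3.1}, using $t = e_2 t = t e_1$ to reduce $D(t)$ to its $(2,1)$-component, and then to eliminate the survivor via a primeness argument that feeds in the vanishing of $D$ on $P_{11}$ from Lemma \ref{lem3.4}.

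First, I would exploit the two-sided centralizer property to write $D(t) = D(t e_1) = D(t) e_1$ and $D(t) = D(e_2 t) = e_2 D(t)$. The first expression lies in $P_{11} + P_{21}$ and the second in $P_{21} + P_{22}$; comparing each with the full Peirce decomposition of $D(t)$ forces $[D(t)]_{11} = [D(t)]_{12} = [D(t)]_{22} = 0$, so $D(t) = [D(t)]_{21}$.

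Next, for any $m \in P_{12}$ the product $mt$ lies in $P_{12} P_{21} \subseteq P_{11}$, so Lemma \ref{lem3.4} yields $D(mt) = 0$. On the other hand, the two-sided centralizer property gives $D(mt) = m D(t) = m [D(t)]_{21}$, so $m [D(t)]_{21} = 0$ for every $m \in P_{12}$. Writing a general $m$ as $e_1 r e_2$ with $r \in P$ and using $e_2 [D(t)]_{21} = [D(t)]_{21}$, this rearranges to $e_1 P [D(t)]_{21} = 0$; primeness of $P$ together with $e_1 \neq 0$ then forces $[D(t)]_{21} = 0$, and therefore $D(t) = 0$.

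The main obstacle is the very last step: the Peirce decomposition alone leaves the $(2,1)$-component unconstrained, so one has to bring in Lemma \ref{lem3.4} through the factorization $mt \in P_{11}$ and close with primeness. Note that no $2$-torsion-freeness is needed anywhere in this argument, and the alternative short route $D(t) = D(te_1) = t D(e_1) = 0$ (via Lemma \ref{lem3.4} applied to $e_1 \in P_{11}$) is available but departs from the paper's Peirce-style presentation.
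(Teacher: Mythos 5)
Your proof is correct and follows essentially the same route as the paper: use the two-sided centralizer property and the Peirce decomposition via $t = te_1 = e_2 t$ to reduce $D(t)$ to $[D(t)]_{21}$, then invoke Lemma \ref{lem3.4} and primeness to kill that component. The only (harmless) difference is in the final step, where you multiply on the left by $m \in P_{12}$ to obtain $e_1 P [D(t)]_{21} = 0$, while the paper computes $D(at+ta)$ for $a \in P_{11}$ and multiplies on the right to obtain $[D(t)]_{21} P e_1 = 0$; both versions close identically.
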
	
	
	\begin{proof}
		Since $t \in P_{21}$,
		\begin{equation}
			\label{eq:3.5}
			D(t) = D(te_1) =D(t)e_1 =[D(t)]_{11} + [D(t)]_{21}.
		\end{equation}
		Now, \begin{equation}
			\label{eq:3.6}
			D(t) = D(e_2t) =e_2D(t) =[D(t)]_{21} + [D(t)]_{22}.
		\end{equation}	
		
		Therefore, from \eqref{eq:3.5} and \eqref{eq:3.6}, we obtain $[D(t)]_{11} = 0 = [D(t)]_{22}$.\\
		Also, \begin{align*}
			&D(t)=D(te_1)\\
			&\implies [D(t)]_{12} +[D(t)]_{21} = [D(t)]_{21}\\
			&\implies [D(t)]_{12} = 0.
		\end{align*}
		Again, for $a \in P_{11}$, we have
		\begin{align*}
			&D(at + ta)= a D(t)+ tD(a)\\
			& \implies D(ta) =a[D(t)]_{21}+0 ~\text{(by ~using ~Lemma ~\ref{lem3.4})}\\
			& \implies D(t)a =0 \\
			& \implies [D(t)]_{21}a =0~\text{(for~ all}~ a \in P_{11})\\
			& \implies [D(t)]_{21}Pe_1 =0\\
			& \implies [D(t)]_{21} =0 ~\text{(since ~$P$~ is ~prime)}.
		\end{align*}
		Hence, $D(t) =0$ for all $t \in P_{21}.$
	\end{proof}

	\begin{lemma}
		\label{lem3.5}
		For any $b \in P_{22}$, we have $D(b) =0.$
	\end{lemma}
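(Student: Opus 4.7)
The plan is to mimic the strategy used in Lemmas \ref{lem3.1}, \ref{lem3.4}, and \ref{lem3.3}: first reduce $D(b)$ to its $(2,2)$-Peirce component by exploiting the two-sided centralizer identities, and then annihilate that component using multiplication by elements of $P_{12}$ together with primeness of $P$.

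First I would write $b=e_2 b=b e_2$ and apply the centralizer identities to obtain
\begin{align*}
D(b)=D(e_2 b)=e_2 D(b) &= [D(b)]_{21}+[D(b)]_{22},\\
D(b)=D(b e_2)=D(b)e_2 &= [D(b)]_{12}+[D(b)]_{22}.
\end{align*}
Comparing with the full Peirce expansion of $D(b)$ forces $[D(b)]_{11}=[D(b)]_{12}=[D(b)]_{21}=0$, so $D(b)=[D(b)]_{22}$.

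Next, for an arbitrary $m\in P_{12}$, observe that $bm=0$, hence $D(mb)=D(mb+bm)$. Expanding via the two-sided centralizer property and using Lemma \ref{lem3.1} (which gives $D(m)=0$) yields $D(mb+bm)=m D(b)+b D(m)=m[D(b)]_{22}$. On the other hand, $mb\in P_{12}$, so Lemma \ref{lem3.1} also gives $D(mb)=0$. Therefore $m[D(b)]_{22}=0$ for every $m\in P_{12}$, which reads $e_1 P e_2\,[D(b)]_{22}=0$. Since $[D(b)]_{22}=e_2[D(b)]_{22}$, this upgrades to $e_1 P[D(b)]_{22}=0$, and primeness of $P$ together with $e_1\neq 0$ forces $[D(b)]_{22}=0$, whence $D(b)=0$.

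No serious obstacle is expected: the entire argument rests on the fact, invoked just before Lemma \ref{lem3.1}, that a commuting Jordan left centralizer over a 2-torsion free prime ring is automatically a two-sided centralizer, so the identities $D(xy)=D(x)y=x D(y)$ are at our disposal. The only point of care is to probe with $m\in P_{12}$ rather than with $t\in P_{21}$, since $t\cdot e_2=0$ would make $t[D(b)]_{22}=0$ vacuous and yield no information.
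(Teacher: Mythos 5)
Your proof is correct and essentially the same as the paper's: both first reduce $D(b)$ to $[D(b)]_{22}$ using the two-sided centralizer identities, and then annihilate that component by primeness after probing with an off-diagonal Peirce corner. The paper happens to probe from the other side --- it takes $t\in P_{21}$, uses $D(bt)=D(b)t$ and Lemma \ref{lem3.3} to reach $[D(b)]_{22}Pe_1=0$, whereas you take $m\in P_{12}$, use $D(mb)=mD(b)$ and Lemma \ref{lem3.1} to reach $e_1P[D(b)]_{22}=0$ --- so your closing caveat that $P_{21}$ would be ``vacuous'' only applies if one multiplies on the wrong side; both choices work equally well.
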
	
	
	\begin{proof}
		As $b \in P_{22}$, we get
		\begin{equation}
			\label{eq:3.7}
			D(b) = D(be_2) =D(b)e_2 =[D(b)]_{12} + [D(b)]_{22}.
		\end{equation}
		Also, \begin{equation}
			\label{eq:3.8}
			D(b) = D(e_2b) = e_2D(b) =[D(b)]_{21} + [D(b)]_{22}.
		\end{equation}		
		
		Therefore, from \eqref{eq:3.7} and \eqref{eq:3.8}, we obtain $[D(b)]_{12} = 0 = [D(b)]_{21}$.\\
		Now, \begin{align*}
			&D(b)=D(be_2) =D(b)e_2\\
			&\implies [D(b)]_{11} +[D(b)]_{22} = [D(b)]_{22}\\
			&\implies [D(b)]_{11} = 0.
		\end{align*}
		Also, for $t \in P_{21}$, we have
		\begin{align*}
			&D( bt + tb) = bD(t)  + tD(b)\\
			& \implies D(bt + tb) = 0 + t[D(b)]_{22}~\text{(by ~using~Lemma ~\ref{lem3.3})}\\
			& \implies D(bt) =0\\
			& \implies D(b) t =0\\
			& \implies [D(b)]_{22} te_1 =0~\text{(for~ all}~ t \in P_{21})\\
			& \implies [D(b)]_{22} Pe_1 =0\\
			& \implies [D(b)]_{22} =0 ~\text{(since ~$P$~ is ~prime)}.
		\end{align*}
		Thus, $D(b) =0$ for all $b \in P_{22}.$ 	
	\end{proof}		
	
	Now, we are ready to prove Theorem \ref{thm3.1}.	
	\begin{proof}[\textbf {Proof of Theorem \ref{thm3.1}}]
		Let $x\in P$. Then
		\begin{align*}
			x=a+m+t+b, ~\text{for some}~ a\in P_{11},~m\in P_{12},~t\in P_{21} ~\text{and}~b\in P_{22}.
		\end{align*}
		
		By using Lemmas \ref{lem3.1}, \ref{lem3.4}, \ref{lem3.3} and \ref{lem3.5}, we obtain
		\begin{align*}
			D(x)=D(a)+ D(m)+ D(t)+ D(b)=0, ~\text{for~ all}~ x \in P.
		\end{align*}
		Thus, $D=0$.
	\end{proof}

	\begin{cor}
		If $D$ is commuting generalized Jordan derivation over a $2$-torsion free prime ring $P$ with a non-trivial idempotent $e_1$, then $D$ is a zero map if it satisfies the following condition:
		$$[D(m)]_{12} = 0 ~\text{for ~all} ~m \in P_{12}.$$
	\end{cor}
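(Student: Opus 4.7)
The plan is to reduce this corollary to Theorem~\ref{thm3.1} by proving that the Jordan derivation $d$ attached to $D$ must vanish identically. Once $d=0$, the defining relations collapse to $D(p^2)=D(p)p$ and $D(p)p=pD(p)$, so $D$ becomes a commuting Jordan left centralizer to which Theorem~\ref{thm3.1} applies verbatim.

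First I linearize the identity $D(p^2)=D(p)p+pd(p)$ by expanding $D((p+q)^2)$ in two ways; the commuting property $D(p)p=pD(p)$ also lets me rewrite $D(p^2)$ as $pD(p)+pd(p)$, yielding the dual identities
\[D(pq+qp) = D(p)q + D(q)p + pd(q) + qd(p) = pD(q) + qD(p) + pd(q) + qd(p)\]
for all $p,q\in P$. Subtracting these gives $[D(p),q]=[p,D(q)]$ for all $p,q\in P$. Setting $q=p^2$ and using $D(p)p=pD(p)$ (which forces $D(p)p^2=p^2D(p)$, so $[D(p),p^2]=0$) produces $[p,D(p^2)]=0$. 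Substituting $D(p^2)=D(p)p+pd(p)$ and simplifying with $D(p)p=pD(p)$ then yields the key relation
\[p\,[d(p),p] \;=\; 0 \qquad \text{for all } p\in P.\]

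To upgrade this to $[d(p),p]=0$, I replace $p$ by $p+r$, expand using the additivity of $d$ and the bilinearity of the commutator, and cancel the two vanishing diagonal terms $p[d(p),p]$ and $r[d(r),r]$; the result is an identity valid for all $p,r\in P$. Specializing at $r=1$ (which is legitimate since $P$ is unital with $1=e_1+e_2$), every term except $[d(p),p]$ collapses: the Jordan-derivation identity at $p=1$ gives $d(1)=d(1^2)=2d(1)$, so $d(1)=0$ by $2$-torsion freeness, and $[d(p),1]=[d(1),p]=0$ trivially. One obtains $[d(p),p]=0$ for all $p\in P$, i.e., $d$ is a commuting Jordan derivation, and Corollary~\ref{cor2.7} forces $d\equiv 0$.

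With $d=0$, the definition of $D$ reduces to $D(p^2)=D(p)p$ together with $D(p)p=pD(p)$, so $D$ is a commuting Jordan left centralizer. The standing hypothesis $[D(m)]_{12}=0$ for $m\in P_{12}$ is unchanged, so Theorem~\ref{thm3.1} delivers $D=0$. I expect the mildly tricky step to be the algebraic rearrangement in the second paragraph that transforms $[p,D(p^2)]=0$ into $p[d(p),p]=0$; after that, the $r=1$ substitution is a clean one-line argument exploiting that $P$ is unital.
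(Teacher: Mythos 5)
Your argument is correct, and its overall skeleton matches the paper's: kill the associated Jordan derivation $d$, observe that $D$ then satisfies $D(p^2)=D(p)p$ and $[D(p),p]=0$, and invoke Theorem~\ref{thm3.1}. The substantive difference is in how $d$ is disposed of. The paper's proof simply asserts that ``there is an associated \emph{commuting} Jordan derivation $d$'' and cites Theorem~\ref{thm2.1}, but the definition of a commuting generalized Jordan derivation only requires $d$ to be a Jordan derivation --- the commuting hypothesis $[d(p),p]=0$ is nowhere assumed for $d$, so the paper is quietly using something it has not justified (and Theorem~\ref{thm2.1} additionally needs the corner conditions $[d(a)]_{11}=0=[d(b)]_{22}$, whereas the natural citation in the prime case is Corollary~\ref{cor2.7}). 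You supply exactly the missing step: linearizing $D(p^2)=D(p)p+pd(p)$ in both its left and right forms to get $[D(p),q]=[p,D(q)]$, specializing $q=p^2$ to obtain $p[d(p),p]=0$, and then linearizing once more and evaluating at $r=1$ (legitimate, since the Peirce framework of the paper presumes $1=e_1+e_2$, and $d(1)=0$ follows from $2$-torsion freeness) to conclude $[d(p),p]=0$. After that, Corollary~\ref{cor2.7} gives $d=0$ and Theorem~\ref{thm3.1} finishes. In short, your proof is not just correct but repairs a genuine gap in the paper's one-line argument; the only cost is the extra linearization computation, which is routine once set up.
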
	
	
	\begin{proof}
		Since $D$ is a commuting generalized Jordan derivation, there is an associated commuting Jordan derivation $d$ over the ring $P$. Then, by Theorem \ref{thm2.1}, $d$ is a zero map. As a result, $D$ becomes a commuting Jordan left centralizer and hence by Theorem \ref{thm3.1}, $D$ is a zero map on $P$.
	\end{proof}

	\section*{Acknowledgment}
	The first author is thankful to the University Grants Commission (UGC), Govt. of India, for financial support under UGC Ref. No. 1256 dated 16/12/2019. The authors are also thankful to the Indian Institute of Technology Patna for providing research facilities.
	
	\section*{Declarations}
	\textbf{~~~Data Availability Statement}: The authors declare that [the/all other] data supporting the findings of this study are available within the article. Any clarification may be requested from the corresponding author, provided it is essential. \\\\

	\textbf{Competing interests}: The authors declare that there is no conflict of interest regarding the publication of this manuscript.\\
	
	\textbf{Use of AI tools declaration}
	The authors declare that they have not used Artificial Intelligence (AI) tools in the creation of this manuscript.

	\end{document}